\setlist[description]{leftmargin=0pt, labelindent=0pt}
\title{A Complete Proof of the Limit Formula for Observable Diameter}
\date{\today}
\author{Shigeaki Yokota}
\address{Graduate School of Science, Tohoku University, Sendai 980-8578, Japan}
\email{shigeaki.yokota.t4@dc.tohoku.ac.jp}
\subjclass[2010]{Primary 53C23}
\keywords{metric measure space, pyramid, observable diameter}
\thanks{This work was partially supported by JSPS KAKENHI Grant Number 22H04942, for which the author served as a research assistant.}
\theoremstyle{plain}
\newtheorem{theorem}{Theorem}[section]
\newtheorem{lemma}[theorem]{Lemma}
\newtheorem*{theorem*}{Theorem}
\theoremstyle{definition}
\newtheorem{definition}[theorem]{Definition}
\theoremstyle{remark}
\newtheorem{remark}[theorem]{Remark}
\newtheorem{example}[theorem]{Example}
\newtheorem*{remark*}{Remark}
\newcommand{\R}{\mathbb{R}}
\newcommand{\X}{\mathcal{X}}
\renewcommand{\P}{\mathcal{P}}
\newcommand{\Q}{\mathcal{Q}}
\newcommand{\M}{\mathcal{M}}
\newcommand{\dpr}{d_{\operatorname{P}}}
\DeclareMathOperator{\lip1}{Lip_1}
\DeclareMathOperator{\id}{id}
\DeclareMathOperator{\pd}{PartDiam}
\DeclareMathOperator{\od}{ObsDiam}
\DeclareMathOperator{\diam}{diam}
\newcommand{\hausp}[1]{{\left(#1\right)_{\operatorname{H}}}}
\newcommand{\mmid}{\mathrel{}\middle\vert\mathrel{}}
\begin{document}

\begin{abstract}
    Ozawa and Shioya~\cite{ozawa2015limit} proposed the limit formula for observable diameters of pyramids under weak convergence. However, we find a constructive counterexample to an inequality used in their proof. In this paper, we correct the inequality and verify the limit formula.
\end{abstract}
\maketitle

\section{Introduction}
Gromov~\cite{gromov2007met} developed the geometry of mm-spaces, based on the concentration of measure phenomenon and the theory of collapsing manifolds. A triple $(X,d_X,\mu_X)$, or simply $X$, is called an \emph{mm-space} if $d_X$ is a complete separable metric on $X$ and $\mu_X$ is a Borel probability measure with full support in $(X,d_X)$. He introduced the observable distance between two mm-spaces, based on measure concentration. He also constructed a natural compactification of this distance, with each element termed a \emph{pyramid}.

A notable quantity in mm-space theory is the ($\kappa$-)\emph{observable diameter} of an mm-space $X$ with screen $Y$, defined as follows: Let $Y$ be a metric space. For any Borel probability measure $\mu$ on $Y$ and any real number $\alpha\leq 1$, the $\alpha$-\emph{partial diameter} of $\mu$, denoted by $\pd(\mu;\alpha)$, is defined as the infimum of the diameters of all Borel subsets $A\subset Y$ with $\mu(A) \geq \alpha$. Let $X$ be an mm-space and $\kappa \geq 0$. Let $\lip1(X,Y)$ denote the set of all 1-Lipschitz continuous functions from $X$ to $Y$. The ($\kappa$-)\emph{observable diameter} with screen $Y$ is defined as
\begin{equation*}
    \od(X,Y;-\kappa) \coloneqq \sup \{\ \pd(f_*\mu_X;1-\kappa) \mid f \in \lip1(X,Y) \ \},
\end{equation*}
where $f_*\mu_X$ is the push-forward of $\mu_X$ by $f$. Additionally, when $Y=\R$, we simply write it as $\od(X;-\kappa)$ and call it the ($\kappa$-)\emph{observable diameter} of $X$.

The observable diameter naturally extends to pyramid. Ozawa and Shioya~\cite{ozawa2015limit} claimed the following theorem.
\begin{theorem}\label{thm:pyramid_od_limit}
    Let $\P$ and $\P_n$, $n=1,2,\ldots$, be pyramids and let $\kappa > 0$. If $\P_n$ converges weakly to $\P$, then
    \begin{align*}
        \od(\P; -\kappa) & = \lim_{\varepsilon\to 0+}\liminf_{n\to\infty} \od(\P_n; -(\kappa+\varepsilon))   \\
                         & = \lim_{\varepsilon\to 0+} \limsup_{n\to\infty} \od(\P_n; -(\kappa+\varepsilon)).
    \end{align*}
\end{theorem}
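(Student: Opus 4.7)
The plan is to split the statement into two inequalities. Since $\liminf_n \leq \limsup_n$ pointwise and both $\varepsilon$-limits exist by monotonicity of $\od(\cdot;-\kappa')$ in $\kappa'$, it will suffice to prove
\begin{align*}
    (\mathrm{I})\quad & \od(\P;-\kappa) \leq \lim_{\varepsilon\to 0+} \liminf_{n\to\infty} \od(\P_n;-(\kappa+\varepsilon)), \\
    (\mathrm{II})\quad & \lim_{\varepsilon\to 0+} \limsup_{n\to\infty} \od(\P_n;-(\kappa+\varepsilon)) \leq \od(\P;-\kappa).
\end{align*}

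For (I), I would fix an mm-space $X$ lying in $\P$ (i.e.\ dominated by $\P$ in Gromov's sense). The weak convergence $\P_n\to\P$ furnishes mm-spaces $X_n$ in $\P_n$ converging to $X$ in concentration. The input I need is a lower semi-continuity of observable diameter under concentration: a near-optimal $f\in\lip1(X,\R)$ can be transferred, through a coupling realising the concentration, to $f_n\in\lip1(X_n,\R)$ whose push-forward approximates $f_*\mu_X$ to within $\varepsilon$ in Prokhorov distance. This yields, for each $\varepsilon>0$,
$$\od(X;-\kappa) \leq \liminf_{n\to\infty}\od(X_n;-(\kappa+\varepsilon)) \leq \liminf_{n\to\infty}\od(\P_n;-(\kappa+\varepsilon)).$$
Sending $\varepsilon\to 0+$ and taking the supremum over all such $X$ gives (I).

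For the harder direction (II), I would pick for each pair $(n,\varepsilon)$ an mm-space $X_n^{\varepsilon}$ in $\P_n$ with $\od(X_n^{\varepsilon};-(\kappa+\varepsilon)) \geq \od(\P_n;-(\kappa+\varepsilon)) - 1/n$, then select by a diagonal argument a sequence $\varepsilon_n \searrow 0$ and mm-spaces $X_n := X_n^{\varepsilon_n}$ that realise the double $\limsup$ in (II). Gromov's precompactness theorem applied to $\{X_n\}$, whose observable diameters are uniformly bounded (along a subsequence) by any constant exceeding the right-hand side of (II), yields a subsequence $X_{n_k}$ converging in concentration to some limit mm-space $X_\infty$; the closedness of weak pyramid convergence places $X_\infty$ in $\P$. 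The proof of (II) is then reduced to the upper semi-continuity bound
$$\limsup_{k\to\infty}\od(X_{n_k};-(\kappa+\varepsilon_{n_k})) \leq \od(X_\infty;-\kappa),$$
combined with $\od(X_\infty;-\kappa)\leq\od(\P;-\kappa)$.

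The displayed inequality above is, I expect, the main obstacle. Observable diameter is only \emph{right}-continuous in $\kappa$, so passing from $\od(X_{n_k};-(\kappa+\varepsilon_{n_k}))$ to $\od(X_\infty;-\kappa)$ forces one to couple the vanishing parameter $\varepsilon_{n_k}$ tightly to the rate of concentration convergence $X_{n_k}\to X_\infty$. This is precisely the step where the original Ozawa--Shioya argument collapses. I anticipate repairing it by a quantitative Prokhorov-distance comparison for $\pd(f_*\mu;1-\kappa)$ under simultaneous perturbation of both $f$ and $\mu$, so that the error budget $\varepsilon_{n_k}$ absorbs both the perturbation of the $1$-Lipschitz test function and the perturbation of its push-forward measure.
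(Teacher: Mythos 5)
There is a genuine gap, and it sits exactly where you predicted the difficulty would be, in direction (II). Your compactness step fails: there is no precompactness theorem giving a subsequence of $\{X_n\}$ converging in concentration to an mm-space $X_\infty$ under a uniform bound on observable diameter. A uniform bound on $\od(X_n;-\kappa)$ does not prevent the sequence from escaping $\X$; indeed the whole reason pyramids exist is that $\X$ is not compact (nor locally precompact) under the observable distance, and $\Pi$ is its compactification. For instance, Gaussian spaces of growing dimension with fixed variance have uniformly bounded observable diameters but no subsequence converging in concentration to any mm-space; their associated pyramids converge weakly to a pyramid not of the form $\P_{X}$. If you repair this by passing to the pyramid limit $\P_{X_{n_k}}\to\Q$ (with $\Q\subset\P$ by closedness), the bound you then need, $\limsup_k \od(X_{n_k};-(\kappa+\varepsilon_{n_k})) \leq \od(\Q;-\kappa)$, is an instance of the very theorem being proved, so the argument becomes circular.

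The second, related gap is that your proposed repair (``a quantitative Prokhorov-distance comparison for $\pd(f_*\mu;1-\kappa)$ under simultaneous perturbation of $f$ and $\mu$'') never engages the actual crux. Weak convergence of pyramids controls only the \emph{bounded-screen} measurements $\M(\cdot;1,R)$ in Hausdorff--Prokhorov distance (Lemma~\ref{lemma:weaktoprokhorov}); an arbitrary test function $f\in\lip1(X)$ must therefore be replaced by one with range in a fixed bounded interval while nearly preserving the partial diameter up to a cap $\min\{R,\cdot\}$. The naive truncation inequality that would do this --- Ozawa--Shioya's~\eqref{eq:od-min} --- is exactly what the paper refutes (Example~\ref{example:odcounterexample}); the correct replacement is Theorem~\ref{thm:lip1minRpartdiam}, where the screen must be enlarged to $[-R/(1-\kappa),R/(1-\kappa)]$. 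The paper's proof then avoids extracting spaces from pyramids entirely: Lemma~\ref{lemma:od-pq-measurement} converts an $\varepsilon$-inclusion of measurements into the two-sided bound $\min\{R,\od(\P;-(\kappa+\varepsilon))\}\leq\od(\Q;-\kappa)+2\varepsilon$, applied symmetrically to $(\P,\P_n)$ and $(\P_n,\P)$; the cap is removed by letting $R\to\infty$, and the proof closes with right-continuity of $\od(\P;-\kappa)$ in $\kappa$ (Lemma~\ref{lemma:odrightcontinuous}). Your direction (I) is plausible (each $X\in\P$ is box-approximated by spaces in $\P_n$, and transferring a near-optimal $f$ through parameters works), but without the truncation device of Theorem~\ref{thm:lip1minRpartdiam}, direction (II) cannot be completed along the lines you sketch.
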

\Cref{thm:pyramid_od_limit} has several important applications such as
the estimate of the observable diameter of $l^p$-product spaces,
the study of $N$-L\'evy families, and the phase transition property
(see \cite{ozawa2015product,ozawa2015limit}). For details on the weak convergence of pyramids, see Section~\ref{sec:preliminaries} or \cite[Definition~2.23]{ozawa2015limit}.

In the proof of the above theorem, specifically in the second equality, they rely on the following unproven inequality (see \cite[Lemma 3.10]{ozawa2015limit}):
\begin{equation}\label{eq:od-min}
    \min\{2R, \od(X; -\kappa)\} \leq \od\!\left(X,[-R,R]; -\kappa\right)
\end{equation}
for any $R > 0$ and $\kappa > 0$. However, we find the following counterexample: Let $R>0$ be a real number and define $X\coloneqq \{R,2R,3R,4R\}$ with the Euclidean metric on $\R$ and the normalized counting measure. In this case, we have
\begin{equation*}
    2R > \od(X; -\kappa) = R > \frac{2R}{3} = \od(X,[-R,R]; -\kappa)
\end{equation*}
for any $\kappa\in[1/2,3/4)$. Even if we modify the right-hand side to
\begin{equation*}
    \od(X, [-C(R), C(R)]; -\kappa)
\end{equation*}
using a constant $C(R)>0$ that depends only on $R$, there is still the following counterexample. For all $N=2,3,\ldots$, there exists an mm-space $X_N$ such that\begin{equation*}
    \od(X_N; -\kappa) = R > \od(X_N, [-(N-1)R, (N-1)R]; -\kappa)
\end{equation*}
for any $\kappa\in[1-1/N,1-1/2N)$. For details, see Example~\ref{example:odcounterexample}.

To complete the proof of Theorem~\ref{thm:pyramid_od_limit}, we prove the following theorem.
\begin{theorem}\label{thm:lip1minRpartdiam}
    Let $\alpha\in(0,1)$ and $R>0$ be two real numbers. For any Borel probability measure $\mu$ on $\R$, there exists a function $f \in \lip1(\R, [-R/\alpha, R/\alpha])$ such that
    \begin{equation*}
        \pd(f_*\mu; \alpha) = \min\{R, \pd(\mu; \alpha)\}.
    \end{equation*}
\end{theorem}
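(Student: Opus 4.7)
The plan is to construct $f$ explicitly. Set $d := \pd(\mu;\alpha)$ and $r := \min\{R, d\}$. The trivial case $r = 0$ is handled by $f \equiv 0 \in \lip1(\R, [-R/\alpha, R/\alpha])$: then $f_*\mu = \delta_0$ and $\pd(f_*\mu; \alpha) = 0 = r$.

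For $r > 0$, first I would verify by a short compactness argument that the infimum defining $d$ is attained, producing a closed interval $[a, b]$ with $b - a = d$ and $\mu([a,b]) \geq \alpha$: the key point is that the left endpoints of any approximating sequence are bounded above by (a variant of) the $(1-\alpha)$-quantile of $\mu$, since $\mu([a_n,\infty)) \geq \alpha$. Then take
\[
f(x) := \max\bigl\{-R/\alpha,\ \min\bigl\{R/\alpha,\ (r/d)(x - c)\bigr\}\bigr\}
\]
for a shift $c \in \R$ to be chosen below. The inner slope $r/d \in (0, 1]$ makes $f$ 1-Lipschitz, and the clipping confines the range to $[-R/\alpha, R/\alpha]$. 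The image $f([a,b])$ is an interval of length $r$ carrying pushforward mass at least $\alpha$, which immediately yields the easy bound $\pd(f_*\mu; \alpha) \leq r$.

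The substantive step is the reverse inequality $\pd(f_*\mu; \alpha) \geq r$: one must show that every Borel $A \subseteq [-R/\alpha, R/\alpha]$ with $\diam(A) < r$ has pushforward mass $\mu(f^{-1}(A)) < \alpha$. For $A$ contained in the open interior $(-R/\alpha, R/\alpha)$, the preimage $f^{-1}(A)$ is an interval of length $(d/r)\diam(A) < d$, so its $\mu$-mass is strictly less than $\alpha$ by definition of $d$. The delicate sub-case is when $A$ touches one of the endpoints $\pm R/\alpha$: there $f^{-1}(A)$ is a half-infinite ray, and computing its $\mu$-mass via the CDF translates the required bound into the requirement that $c$ lie in the explicit interval
\[
\bigl[Q_\mu(1-\alpha) - Rd/(\alpha r) + d,\ Q_\mu(\alpha) + Rd/(\alpha r) - d\bigr].
\]

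The main obstacle I foresee is showing this interval for $c$ is nonempty. It can be empty when $\mu$'s quantile spread $Q_\mu(1-\alpha) - Q_\mu(\alpha)$ exceeds $2\bigl(Rd/(\alpha r) - d\bigr)$, as happens for measures that concentrate much of their mass in long tails far from the $\pd$-optimizing core $[a,b]$. In that regime I would precompose $f$ with a 1-Lipschitz folding map that uses pieces of small positive slope to collapse the distant tails of $\mu$ back toward $[a, b]$, reducing to the previous case; provided each folded piece contributes at most mass $\alpha$ to each endpoint cluster of the resulting pushforward, the identity $\pd(f_*\mu; \alpha) = r$ is preserved after a careful computation on the CDF.
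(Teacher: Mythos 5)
Your setup is fine as far as it goes: the attainment of an optimal interval $[a,b]$ via the quantile bound is correct, the clipped affine map does give the upper bound $\pd(f_*\mu;\alpha)\leq r$, and you correctly locate the obstruction in the endpoint rays, where the nonemptiness of your admissible interval for $c$ can fail. But the final paragraph, where the actual difficulty lives, does not work as described, and it is precisely the regime the theorem is about. Precomposing with pieces of \emph{small positive slope} to ``collapse the distant tails toward $[a,b]$'' is self-defeating: if any region $T\subset\R$ with $\mu(T)\geq\alpha$ is mapped into a set of diameter $<r$, then $\pd(f_*\mu;\alpha)<r$ and the identity is destroyed. So every low-slope or collapsed piece must receive image mass strictly less than $\alpha$ (note: your ``at most mass $\alpha$'' is already too weak). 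Moreover, the hard case is not ``one core plus light tails.'' In the paper's own Example~3.1 ($\mu$ uniform on $\{R,2R,\ldots,2NR\}$, $\alpha\in(1/2N,1/N]$, so $d=r=R$), there is no distinguished core --- every adjacent pair of atoms is an optimizer, each cluster one wishes to fold carries mass comparable to $\alpha$, and any valid $f$ must be essentially isometric near each of roughly $1/\alpha$ mutually distant clusters, so that the image genuinely needs length close to the full budget $2R/\alpha$. A single affine window with folded tails cannot produce this; your sketch assumes (``provided each folded piece contributes at most mass $\alpha$\,\dots'') exactly what has to be arranged and proven.

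The paper's proof supplies the three missing ingredients. After rescaling so that $d=1$, it chooses window centers greedily: $x_0\coloneqq-\infty$ and $x_{n+1}\coloneqq\min\bigl\{x_\infty,\ \sup\{x : \mu((x_n,x))<\alpha\}\bigr\}$. First, this guarantees that each gap between consecutive windows has $\mu$-mass $<\alpha$ by construction, so collapsing it (slope $0$, not small positive slope) is harmless. Second, since consecutive steps consume disjoint mass packets of size $\geq\alpha$, the number of windows satisfies $N\leq 1/\alpha$; with slope-$1$ windows $(x_n-1,x_n+1)$ this is exactly what confines the range to $[-1/\alpha,1/\alpha]$, i.e.\ to $[-R/\alpha,R/\alpha]$ after scaling by $\min\{R,\pd(\mu;\alpha)\}$ --- in your sketch nothing controls how much range the folding consumes. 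Third, the identity $\pd(f_*\mu;\alpha)=r$ is then verified by a case analysis showing that every image interval of length $<1$ pulls back either into a single window (preimage of diameter $<1$, hence mass $<\alpha$ since $\pd(\mu;\alpha)=1$) or into a single gap or tail (mass $<\alpha$ by the definition of the $x_n$). This case analysis is the substance of the proof; your closing phrase ``the identity is preserved after a careful computation on the CDF'' is naming that work, not doing it. As written, the proposal proves the theorem only for measures whose $\alpha$- and $(1-\alpha)$-quantile spread is at most $2(Rd/(\alpha r)-d)$, and offers no viable mechanism beyond that.
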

By \Cref{thm:lip1minRpartdiam}, the inequality (\ref{eq:od-min}) is revised as
\begin{equation*}
    \min\{R, \od(X; -\kappa)\} \leq \od\!\left(X,\left[-\frac{R}{1-\kappa},\frac{R}{1-\kappa}\right];\ -\kappa\right)
\end{equation*}
for any $\kappa\in(0,1)$ and $R>0$. This completes the proof of \Cref{thm:pyramid_od_limit}.
Furthermore, this revised inequality is sharp in the following sense.
\begin{theorem}\label{prop:lip1minRpartdiamIsSharp}
    Let $R>0$ be a real number. Then
    \begin{equation*}
        \limsup_{\kappa\to 1-}\ \sup \left\{ \frac{2R}{1-\kappa} - \diam I \mid I\subset \R \text{ Borel}, Q(R,I,\kappa) \leq 1\right\} \leq 2R,
    \end{equation*}
    where
    \begin{equation*}
        Q(R,I,\kappa) \coloneqq \sup_{X\in\X} \frac{\ \min\{R, \od(X; -\kappa)\}\ }{\od\left(X,I; -\kappa\right)}.
    \end{equation*}
\end{theorem}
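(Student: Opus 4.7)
The plan is to show, for each $\kappa\in(0,1)$, that if a Borel $I\subset\R$ satisfies $Q(R,I,\kappa)\leq 1$, then $\diam I \geq (M-1)R$, where $M \coloneqq \lfloor 2/(1-\kappa)\rfloor$; since $(M-1)R$ differs from $2R/(1-\kappa)$ by less than $2R$, this yields the claim after taking $\limsup_{\kappa\to 1-}$. To achieve this I would exhibit, for each $\kappa$, a single explicit mm-space $X_M$ realizing $\od(X_M;-\kappa)=R$ while forcing $\od(X_M, I;-\kappa)\leq \diam I/(M-1)$ for every Borel $I$.

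Set $\alpha\coloneqq 1-\kappa$ and take $X_M\coloneqq \{0, R, 2R,\ldots,(M-1)R\}$ with the restricted Euclidean metric and uniform probability measure $\mu_M = \frac{1}{M}\sum_{j=0}^{M-1}\delta_{jR}$. A short arithmetic check gives $\alpha M\in(1,2]$: the upper bound $\alpha M\leq 2$ is by definition of $M$, while $\alpha M>1$ follows from $M+1>2/\alpha\geq 1+1/\alpha$ (the latter using $\alpha\leq 1$). Hence any subset of $\supp\mu_M$ with mass $\geq\alpha$ contains at least two atoms, so adjacent atoms show $\pd(\mu_M;\alpha)=R$. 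Using the general inequality $\pd(f_*\mu;\alpha)\leq \pd(\mu;\alpha)$ for any 1-Lipschitz $f$ (push a mass-realizing set forward), and taking $f=\id_{X_M}$, this yields $\od(X_M;-\kappa)=R$.

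For the other side, I bound $\od(X_M, I;-\kappa)\leq \diam I/(M-1)$ for every Borel $I\subset \R$. Let $f\in\lip1(X_M, I)$. If $f$ is not injective, some point of $I$ receives $f_*\mu_M$-mass at least $2/M\geq\alpha$, so $\pd(f_*\mu_M;\alpha)=0$. If $f$ is injective, $f_*\mu_M$ consists of $M$ distinct atoms of mass $1/M$ lying in a subset of $\R$ of diameter $\leq \diam I$; by pigeonhole two of them are within distance $\diam I/(M-1)$, and these two atoms already carry mass $2/M\geq\alpha$, giving $\pd(f_*\mu_M;\alpha)\leq \diam I/(M-1)$.

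Combining, $Q(R,I,\kappa)\leq 1$ forces $R\leq \od(X_M, I;-\kappa)\leq \diam I/(M-1)$, hence $\diam I\geq (M-1)R$, and therefore
\[
\frac{2R}{1-\kappa}-\diam I \;\leq\; \frac{2R}{\alpha}-(M-1)R \;=\; \left(\frac{2}{\alpha}-M+1\right)R \;<\; 2R,
\]
using $2/\alpha-M\in[0,1)$. This bound is uniform in $I$, so the supremum is strictly less than $2R$ for every $\kappa$, and its $\limsup$ as $\kappa\to 1-$ is at most $2R$. The argument is essentially pigeonhole plus a careful choice of $M$; the one step I would double-check first is the arithmetic placing $\alpha M\in(1,2]$, since that is what simultaneously powers the two-atom threshold in computing $\pd(\mu_M;\alpha)$ and the mass collapse in the non-injective case.
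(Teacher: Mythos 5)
Your proof is correct, and its architecture is genuinely different from the paper's. The paper recycles Example~\ref{example:odcounterexample}: along the discrete sequence $\kappa_n=1-1/n$ it exhibits $X_n=\{R,2R,\ldots,2nR\}$ with $\od(X_n;-\kappa_n)=R>\od(X_n,[-(n-1)R,(n-1)R];-\kappa_n)$, concludes $Q(R,[-(n-1)R,(n-1)R],\kappa_n)>1$, and reads off the $\limsup$ bound; the reduction from that one symmetric interval to arbitrary Borel $I$ (any $I$ with $\diam I\le 2(n-1)R$ lies in a translate of it, and $\od(X,\cdot\,;-\kappa)$ is translation-invariant and monotone under inclusion, so $Q$ can only increase), as well as the behaviour at $\kappa$ strictly between consecutive $\kappa_n$, is left implicit. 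You instead prove, for \emph{every} $\kappa\in(0,1)$ and \emph{every} Borel $I$, the quantitative implication $Q(R,I,\kappa)\le 1\Rightarrow\diam I\ge(M-1)R$ with $M=\lfloor 2/(1-\kappa)\rfloor$, via the single $M$-point space $X_M$; since $2/(1-\kappa)-M\in[0,1)$, this yields the pointwise bound $\sup\left\{2R/(1-\kappa)-\diam I\right\}<2R$ at each $\kappa$, which is exactly what a $\limsup$ over the continuous parameter $\kappa\to1-$ demands, so your version actually closes the interpolation and general-$I$ steps the paper glosses over. Your arithmetic placing $\alpha M\in(1,2]$ checks out, and your injective/non-injective split implements the pigeonhole on the \emph{sorted} atoms of $f_*\mu_M$ correctly; this is in fact the right repair of the corresponding step in Example~\ref{example:odcounterexample}, whose display $\sum_{n=1}^{N-1}|f(nR)-f((n+1)R)|\le\diam I=cR(N-1)$ is off (one has $\diam I=2(N-1)R=cR(2N-1)$, and a sum of consecutive differences of a non-monotone $f$ is not bounded by the diameter) --- sorting the image values and using that \emph{any} two atoms of the uniform measure carry mass $2/M\ge\alpha$, as you do, sidesteps both problems. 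What the paper's route buys is economy, reusing the example already needed to refute \eqref{eq:od-min}; what yours buys is a uniform-in-$\kappa$, uniform-in-$I$ statement that strictly strengthens the paper's implicit lower bound $\diam I\ge 2R\left(1/(1-\kappa)-1\right)$, since $(\lfloor 2/(1-\kappa)\rfloor-1)R>(2/(1-\kappa)-2)R$.
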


\section{Preliminaries}\label{sec:preliminaries}
Let $\lip1(X,Y)$ denote the set of all 1-Lipschitz continuous maps from a metric space $X$ to another $Y$, where a map $f\colon X\to Y$ is 1-Lipschitz continuous if $d_Y(f(x),f(y)) \leq d_X(x,y)$ holds for all $x,y\in X$. Additionally, when $Y=\R$, we write it simply as $\lip1(X)$. For all subsets $A$ of a metric space $X$, we denote the diameter of $A$ by $\diam(A)$.

\begin{lemma}\label{lem:pdLowerSemiContinuous}
    Let $\mu$ be a Borel probability measure on a metric space $X$. Then, $\pd(\mu;\alpha)$ is lower semi-continuous in $\alpha \leq 1$.
\end{lemma}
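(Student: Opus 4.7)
The function $\pd(\mu;\cdot)$ is non-decreasing in $\alpha$, since raising $\alpha$ restricts the family $\{A \text{ Borel} : \mu(A) \geq \alpha\}$ over which the diameter infimum is taken. For a non-decreasing function, lower semi-continuity coincides with left-continuity. The case $\alpha_0 \leq 0$ is trivial since $\pd(\mu;\alpha_0) = 0$ (take $A = \emptyset$). So fix $\alpha_0 \in (0, 1]$, choose $\beta_n \uparrow \alpha_0$, and set $L := \lim_n \pd(\mu;\beta_n)$; since $L \leq \pd(\mu;\alpha_0)$ by monotonicity, the task reduces to proving $\pd(\mu;\alpha_0) \leq L$.

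Given $\epsilon > 0$, for each large $n$ I pick a closed Borel set $A_n$ (replacing $A$ by its closure preserves diameter and does not decrease measure) with $\mu(A_n) \geq \beta_n$ and $\diam A_n \leq L + \epsilon$. The plan is to extract from $(A_n)$ a single Borel set $A^\ast$ with $\mu(A^\ast) \geq \alpha_0$ and $\diam A^\ast \leq L + \epsilon$; this would give $\pd(\mu;\alpha_0) \leq L + \epsilon$ for every $\epsilon > 0$, hence $\pd(\mu;\alpha_0) \leq L$, finishing the proof.

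For the extraction I would use a Prokhorov-type selection. Define finite Borel measures $\nu_n$ by $\nu_n(B) := \mu(B \cap A_n)$, so that $\nu_n \leq \mu$ and $\nu_n(X) \geq \beta_n$. Tightness of $\mu$ (in the mm-space framework its support is Polish, so Ulam's theorem applies) makes the family $\{\nu_n\}$ uniformly tight, and Prokhorov's theorem yields a subsequence $\nu_{n_k}$ converging weakly to a finite Borel measure $\nu^\ast$ with $\nu^\ast \leq \mu$ and $\nu^\ast(X) = \lim_k \nu_{n_k}(X) \geq \alpha_0$. Taking $A^\ast := \supp \nu^\ast$ gives $\mu(A^\ast) \geq \nu^\ast(A^\ast) = \nu^\ast(X) \geq \alpha_0$.

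The main obstacle is verifying the diameter bound $\diam A^\ast \leq L + \epsilon$. For $x, y \in A^\ast$ and open neighborhoods $U \ni x$, $V \ni y$, the definition of support gives $\nu^\ast(U), \nu^\ast(V) > 0$, so by the Portmanteau theorem $\liminf_k \nu_{n_k}(U), \liminf_k \nu_{n_k}(V) > 0$; hence $A_{n_k} \cap U$ and $A_{n_k} \cap V$ are non-empty for infinitely many $k$. Picking $u_k \in A_{n_k} \cap U$ and $v_k \in A_{n_k} \cap V$ and applying the triangle inequality yields $d(x, y) \leq \diam U + \diam A_{n_k} + \diam V \leq L + \epsilon + \diam U + \diam V$, and shrinking $U, V$ to singletons around $x$ and $y$ gives $d(x, y) \leq L + \epsilon$, so $\diam A^\ast \leq L + \epsilon$ as required.
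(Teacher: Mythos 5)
Your argument is correct in substance, but it takes a genuinely different route from the paper: the paper's entire proof is two lines long, observing that $\pd(\mu;\alpha)$ is non-decreasing in $\alpha$ (which you also note) and then \emph{citing} left-continuity from Ozawa--Shioya [Lemma 3.1(1)], whereas you re-prove that cited ingredient from scratch. Your mechanism --- restricting $\mu$ to near-optimal sets $A_n$, extracting a weak limit $\nu^\ast$ of $\nu_n(\cdot) = \mu(\cdot \cap A_n)$ via Prokhorov, and bounding $\diam \supp \nu^\ast$ by a Portmanteau/support argument --- is sound: the open-set Portmanteau inequality does give $\liminf_k \nu_{n_k}(U) \geq \nu^\ast(U) > 0$, so both $A_{n_k}\cap U$ and $A_{n_k}\cap V$ are non-empty for all large $k$ simultaneously, and the triangle inequality closes the diameter estimate. (Two steps you assert without proof are standard but worth a line each: $\nu^\ast \leq \mu$ follows from the open-set Portmanteau bound $\nu^\ast(U) \leq \liminf_k \nu_{n_k}(U) \leq \mu(U)$ together with outer regularity of finite Borel measures on metric spaces, and $\nu^\ast(\supp\nu^\ast) = \nu^\ast(X)$ needs tightness of $\nu^\ast$, inherited from the uniform tightness of the $\nu_n$.) What the self-contained route buys is independence from the external reference; what it costs is machinery --- on the paper's side, the citation plus monotonicity is all that is needed, and every application in this paper (Lemma 2.8) only involves measures in $\M(\P;1)$, i.e.\ measures on $\R$, where your argument runs without friction.

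The one step that does not match the lemma's stated generality is your tightness justification. The lemma is asserted for a Borel probability measure on an \emph{arbitrary} metric space $X$, and there your parenthetical ``its support is Polish, so Ulam's theorem applies'' fails: on a general metric space the support of a Borel probability measure need not be Polish, need not carry full measure, and the measure need not be tight even when $X$ is separable. A concrete obstruction: let $B \subset [0,1]$ be a Bernstein set and $\mu(A \cap B) := \lambda(A)$ for Borel $A \subset [0,1]$ (well-defined since Borel subsets of the complement of $B$ are countable); then $\mu$ is a Borel probability measure on the separable metric space $B$ whose compact sets are all $\mu$-null, so Prokhorov extraction is unavailable. Your proof is therefore complete for tight measures --- in particular whenever $X$ is Polish, which covers everything this paper uses the lemma for --- but as a proof of the lemma as literally stated it has a gap exactly where the paper instead leans on the cited Lemma 3.1(1).
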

\begin{proof}
    In $\alpha \leq 1$, we obtain that $\pd(\mu;\alpha)$ is monotonically non-decreasing from the definition and is left-continuous from \cite[Lemma 3.1(1)]{ozawa2015limit}. This completes the proof.
\end{proof}
\begin{lemma}[{\cite[Proposition 2.18 (1)]{shioya2016mmg}}]\label{lemma:lip1reductpartdiam}
    Let $\alpha \leq 1$ be a real number, $\mu$ a Borel probability measure on $\R$, and $f$ a function in $\lip1(\R)$. Then, $\pd(f_*\mu;\alpha) \leq \pd(\mu;\alpha)$.
\end{lemma}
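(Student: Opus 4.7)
The plan is to unwind the definition of partial diameter and do an $\varepsilon$-approximation. Fix $\varepsilon>0$. If $\pd(\mu;\alpha)=\infty$ or $\alpha\leq 0$ the inequality is trivial, so assume $0<\alpha\leq 1$ and $\pd(\mu;\alpha)<\infty$. By definition of the infimum, there exists a Borel set $A\subseteq\R$ with $\mu(A)\geq\alpha$ and $\diam(A)\leq \pd(\mu;\alpha)+\varepsilon$. The target is then to produce a Borel set $B\subseteq\R$ with $f_*\mu(B)\geq\alpha$ and $\diam(B)\leq \diam(A)$, which by definition gives $\pd(f_*\mu;\alpha)\leq \pd(\mu;\alpha)+\varepsilon$; sending $\varepsilon\to 0$ completes the proof.

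The natural candidate is $B\coloneqq \overline{f(A)}$. First, $B$ is closed in $\R$, hence Borel. Second, the 1-Lipschitz property of $f$ gives $\diam(f(A))\leq \diam(A)$, and taking the closure in a metric space preserves diameter, so $\diam(B)=\diam(f(A))\leq \diam(A)\leq \pd(\mu;\alpha)+\varepsilon$. Third, since $A\subseteq f^{-1}(f(A))\subseteq f^{-1}(B)$ and $f^{-1}(B)$ is Borel (as $f$ is continuous and $B$ is Borel), we get
\begin{equation*}
    f_*\mu(B)=\mu(f^{-1}(B))\geq \mu(A)\geq \alpha.
\end{equation*}

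The one subtle point I want to flag is the Borel-measurability of the pushed-forward set: $f(A)$ itself need not be Borel even in $\R$, only analytic, so it cannot be plugged directly into the partial diameter infimum, which is taken over Borel sets. Passing to the closure sidesteps this issue at no cost, since closure does not change the diameter. This step is the only nontrivial aspect of the argument; everything else is a mechanical unwinding of the definitions of $\pd$, pushforward measure, and the 1-Lipschitz condition.
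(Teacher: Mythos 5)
Your proof is correct; the paper itself imports this lemma from \cite[Proposition~2.18~(1)]{shioya2016mmg} without reproducing a proof, and your argument --- take an almost-optimal Borel set $A$ with $\mu(A)\geq\alpha$, push it forward, and bound $\diam f(A)\leq\diam A$ via the $1$-Lipschitz condition --- is exactly the standard one behind that citation. Your handling of the measurability issue (replacing the possibly non-Borel analytic image $f(A)$ by $\overline{f(A)}$, which is Borel, satisfies $A\subseteq f^{-1}(\overline{f(A)})$, and has the same diameter) is the right fix and the only genuinely delicate point, so nothing is missing.
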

\begin{lemma}\label{lemma:preservedistancepartdiam}
    Let $\alpha \leq 1$, $s$ and $c$ be real numbers, and let $\mu$ be a Borel probability measure on $\R$. Set a function $f$ as $f(x) \coloneqq sx + c$ for all $x\in\R$. Then, we have
    \begin{equation*}
        \pd(f_*\mu;\alpha) = |s|\pd(\mu;\alpha).
    \end{equation*}
\end{lemma}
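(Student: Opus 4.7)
The plan is to reduce the identity to a bijective change of variable on $\R$ when $s\neq 0$, and to handle $s=0$ as a degenerate case separately. When $s\neq 0$, the map $f(x)=sx+c$ is a homeomorphism of $\R$ with inverse $f^{-1}(y)=(y-c)/s$, so images and preimages of Borel sets remain Borel, $f$ scales diameters by exactly $|s|$, and the push-forward satisfies $f_*\mu(B)=\mu(f^{-1}(B))$ for every Borel $B\subset\R$. These three facts are the only ingredients I will need.

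To establish $\pd(f_*\mu;\alpha)\leq |s|\pd(\mu;\alpha)$, I would take an arbitrary Borel set $A$ with $\mu(A)\geq\alpha$, set $B\coloneqq f(A)$, and observe $f_*\mu(B)=\mu(A)\geq\alpha$ together with $\diam B=|s|\diam A$; taking the infimum over admissible $A$ then gives the bound. The reverse inequality is symmetric: given a Borel $B$ with $f_*\mu(B)\geq\alpha$, the set $A\coloneqq f^{-1}(B)$ satisfies $\mu(A)\geq\alpha$ and $\diam A=|s|^{-1}\diam B$, and passing to the infimum over admissible $B$ yields $|s|\pd(\mu;\alpha)\leq\pd(f_*\mu;\alpha)$.

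For $s=0$ the push-forward degenerates to the Dirac mass $\delta_c$, whose $\alpha$-partial diameter is $0$ for every $\alpha\leq 1$, matching $|s|\pd(\mu;\alpha)=0$ under the convention $0\cdot\infty=0$ (needed only if $\pd(\mu;\alpha)=\infty$). I do not anticipate any genuine obstacle here: the argument is a routine change of variables, and the only mild care needed is the measurability remark that $f$ is a homeomorphism when $s\neq 0$, together with the convention that resolves the degenerate case.
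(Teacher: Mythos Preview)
Your proof is correct and follows essentially the same approach as the paper's: both handle $s=0$ separately and, for $s\neq 0$, exploit that $f$ is a homeomorphism of $\R$ to reparametrize the infimum defining $\pd(f_*\mu;\alpha)$ via the bijective substitution $A\leftrightarrow f(A)$. The paper compresses your two-inequality argument into a single chain of equalities, but the content is identical.
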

\begin{proof}
    If $s = 0$, then it is clear that both sides of the equality are zero. Assuming that $s \neq 0$, we have
    \begin{align*}
        \pd(f_*\mu;\alpha)
         & = \inf\{\ \diam A \mid A\subset \R \text{ Borel} \text{ and } \mu(f^{-1}(A)) \geq \alpha\ \} \\
         & = \inf\{\ \diam f(B) \mid B\subset \R \text{ Borel} \text{ and } \mu(B) \geq \alpha\ \}      \\
         & = |s|\pd(\mu;\alpha).
    \end{align*}
    This completes the proof.
\end{proof}

\begin{definition}[Prokhorov distance] Let $\mu$ and $\nu$ be two Borel probability measures on a metric space $X$. The \emph{Prokhorov distance} between $\mu$ and $\nu$ is defined as the infimum of $\varepsilon > 0$ such that $\mu(U_\varepsilon(A)) \geq \nu(A) - \varepsilon$ for all Borel subsets $A\subset X$.
\end{definition}

\begin{lemma}[{\cite[Lemma 3.8]{ozawa2015limit}}]
    Let $\mu$ and $\nu$ be two Borel probability measures on a metric space $X$, and let $\alpha\leq 1$ and $\varepsilon > 0$ be real numbers. If $\dpr(\mu,\nu) < \varepsilon$, then\begin{equation*}
        \pd(\mu; \alpha) \leq \pd(\nu; \alpha + \varepsilon) + 2\varepsilon.
    \end{equation*}
\end{lemma}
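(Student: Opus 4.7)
The plan is to exhibit, for each $\alpha \coloneqq 1-\kappa \in (0,1]$, a single mm-space $X_\alpha \in \X$ whose combinatorial rigidity already forces any Borel $I \subset \R$ with $Q(R, I, \kappa) \leq 1$ to satisfy $\diam I \geq (N-1)R$, where $N \coloneqq \lfloor 2/\alpha \rfloor \geq 2$. Because $N \geq 2/\alpha - 1$, this yields $\diam I \geq 2R/(1-\kappa) - 2R$, hence $\frac{2R}{1-\kappa} - \diam I \leq 2R$ uniformly in $I$ and $\kappa$, and the $\limsup$ bound follows immediately.

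I will take $X_\alpha \coloneqq \{0, R, 2R, \ldots, (N-1)R\}$ with the Euclidean metric and the uniform probability measure $\mu_\alpha$ assigning mass $1/N$ to each point. The choice of $N$ guarantees $N\alpha \in (1, 2]$, so any Borel subset of $X_\alpha$ of $\mu_\alpha$-mass at least $\alpha$ must contain at least two atoms, and the smallest diameter of such a subset is $R$. Thus $\pd(\mu_\alpha; \alpha) = R$, and restricting the identity of $\R$ to $X_\alpha$ produces a 1-Lipschitz map into $\R$ that achieves this partial diameter, so $\od(X_\alpha; -\kappa) \geq R$ and consequently $\min\{R, \od(X_\alpha; -\kappa)\} = R$.

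The key estimate is $\od(X_\alpha, I; -\kappa) \leq \min\{R, \diam I/(N-1)\}$ for every Borel $I \subset \R$. Given $f \in \lip1(X_\alpha, I)$, set $y_i \coloneqq f(iR)$ for $0 \leq i \leq N-1$. If two of these images coincide, the resulting atom has $f_*\mu_\alpha$-mass at least $2/N \geq \alpha$, so $\pd(f_*\mu_\alpha; \alpha) = 0$. Otherwise the $y_i$ are $N$ distinct points of $I$, and since $N\alpha \in (1, 2]$ one checks that $\pd(f_*\mu_\alpha; \alpha)$ equals the minimum pairwise distance of the $y_i$. Sorting the $y_i$ yields $N-1$ positive gaps summing to at most $\diam I$, so their minimum is at most $\diam I/(N-1)$ by pigeonhole; on the other hand, the 1-Lipschitz condition applied to indices $0$ and $1$ gives $|y_0 - y_1| \leq R$, so the minimum pairwise distance is likewise at most $R$. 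Taking the supremum over $f$ gives the claimed bound.

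Combining these observations: if $Q(R, I, \kappa) \leq 1$, then $R = \min\{R, \od(X_\alpha; -\kappa)\} \leq \od(X_\alpha, I; -\kappa) \leq \diam I/(N-1)$, so $\diam I \geq (N-1)R \geq 2R/\alpha - 2R$. Taking the supremum over such $I$ and the $\limsup$ as $\kappa \to 1-$ completes the proof. The step I expect to require the most care is the upper bound $\od(X_\alpha, I; -\kappa) \leq \diam I/(N-1)$ for arbitrary Borel $I$ (and not merely intervals), which ultimately rests on the observation that the images lie inside $\R$ and their sorted gaps sum to at most $\diam I$ regardless of the shape of $I$.
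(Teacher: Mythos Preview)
Your proposal does not address the stated lemma at all. The statement you were asked to prove is an inequality relating the partial diameters of two Borel probability measures $\mu,\nu$ on a metric space once their Prokhorov distance is small: if $\dpr(\mu,\nu)<\varepsilon$ then $\pd(\mu;\alpha)\leq\pd(\nu;\alpha+\varepsilon)+2\varepsilon$. What you have written is instead an argument for \Cref{prop:lip1minRpartdiamIsSharp}, the sharpness result about the quantity $Q(R,I,\kappa)$ and the bound $\frac{2R}{1-\kappa}-\diam I\leq 2R$. None of the objects in your write-up---the mm-spaces $X_\alpha$, the function $Q$, the observable diameter with screen $I$---appear in the lemma, and conversely the lemma's hypotheses (two measures $\mu,\nu$ with $\dpr(\mu,\nu)<\varepsilon$) are never invoked.

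For the actual lemma, the standard argument is short: take any Borel set $A$ with $\nu(A)\geq\alpha+\varepsilon$; from $\dpr(\mu,\nu)<\varepsilon$ one gets $\mu(\overline{U_\varepsilon(A)})\geq\nu(A)-\varepsilon\geq\alpha$, and $\diam\overline{U_\varepsilon(A)}\leq\diam A+2\varepsilon$, so $\pd(\mu;\alpha)\leq\diam A+2\varepsilon$; taking the infimum over such $A$ gives the conclusion. In the present paper the lemma is simply quoted from \cite{ozawa2015limit} without proof, so there is no ``paper's own proof'' to compare against beyond this routine verification. You should discard the current write-up and supply this argument (or an equivalent one) instead.
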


\begin{definition}[mm-Isomorphism, $\X$]
    Let $X,Y$ be mm-spaces. We say that $X$ and $Y$ are \emph{mm-isomorphic} if there exists an isometry $f\colon X\to Y$ such that $f_*\mu_X = \mu_Y$. Such an isometry is called an \emph{mm-isomorphism}. We denote the set of mm-isomorphism classes of mm-spaces by $\X$.
\end{definition}
\begin{definition}
    Let $X,Y$ be mm-spaces. We say that $X$ dominates $Y$, and we write $Y\prec X$, if there exists a 1-Lipschitz function $f\colon X\to Y$ such that $f_*\mu_X = \mu_Y$.
\end{definition}

The distance known as the \emph{box distance}, denoted by $\Box$, is introduced on $\X$. This distance is something like a metrization of the measured Gromov-Hausdorff convergence. For details, see \cite[§$3\frac{1}{2}$ B]{gromov2007met}.

\begin{definition}[Pyramid]
    A subset $\P\subset \X$ is a \emph{pyramid} if the following conditions (1), (2), and (3) are satisfied.
    \begin{enumerate}
        \item For any $X\in\X$ and $Y\in\P$, if $X\prec Y$, then $X\in\P$.
        \item For any two $X,Y\in\P$, there exists $Z\in\P$ such that $X\prec Z$ and $Y\prec Z$.
        \item $\P$ is non-empty and $\Box$-closed.
    \end{enumerate}
    We denote by $\Pi$ the set of all pyramids.
\end{definition}

\begin{definition}[{1-Measurement and $(1,R)$-Measurement of pyramid}]
    Let $\P$ be a pyramid and $R > 0$ a real number. We define the \emph{$1$-measurement} of $\P$ as
    \begin{align*}
        \M(\P; 1) & \coloneqq \bigl\{f_*\mu_X \mid X\in\P,\ f\in\lip1(X)\bigr\},
    \end{align*}
    and the \emph{$(1,R)$-measurement} of $\P$ as
    \begin{equation*}
        \M(\P; 1, R) \coloneqq \bigl\{f_*\mu_X \mid X\in\P,\ f\in\lip1(X,[-R,R])\bigr\}.
    \end{equation*}
\end{definition}

The set of pyramids $\Pi$ has a natural topology called the weak topology; for which $\Pi$ forms a compactification of $\X$. We do not describe its definition (see \cite[Definition~2.23]{ozawa2015limit}) because we do not use it. The following lemma is enough for the proof of \Cref{thm:lip1minRpartdiam}.
\begin{lemma}[{\cite[Lemma~3.6]{ozawa2015limit}}]\label{lemma:weaktoprokhorov}
    Let $\P$ and $\P_n$, $n=1,2,\ldots$, be pyramids and let $R > 0$. If $\P_n$ converges weakly to $\P$ as $n\to\infty$, then $\M(\P_n;1,R)$ converges to $\M(\P;1,R)$ as $n\to\infty$ in the Hausdorff distance with respect to the Prokhorov distance.
\end{lemma}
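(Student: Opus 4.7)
The plan is to transfer the problem from sets of push-forward measures on $[-R,R]$ to sets of mm-spaces with underlying metric space inside $[-R,R]$, via the correspondence $\mu\leftrightarrow Y_\mu\coloneqq(\supp\mu,|\cdot|,\mu)$, and to derive the Hausdorff--Prokhorov convergence of $(1,R)$-measurements from a Hausdorff--$\Box$ convergence inside the compact ``slice'' $\mathcal{Y}_R\coloneqq\{Y_\mu:\mu\text{ a Borel probability measure on }[-R,R]\}\subset\X$.

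The first step is the bijective characterization $\mu\in\M(\P;1,R)$ if and only if $Y_\mu\in\P$. For one direction, the inclusion $Y_\mu\hookrightarrow[-R,R]$ is 1-Lipschitz and realizes $\mu$ as a $(1,R)$-push-forward from $Y_\mu$; for the other, if $\mu=f_*\mu_X$ with $X\in\P$ and $f\in\lip1(X,[-R,R])$, then restricting the codomain of $f$ to $\supp\mu$ yields a 1-Lipschitz map $X\to Y_\mu$ with push-forward $\mu_{Y_\mu}$, whence $Y_\mu\prec X$ and $Y_\mu\in\P$ by the first pyramid axiom.

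Next I would compare the Prokhorov topology on the Borel probability measures on $[-R,R]$ with the restriction of $\Box$ to $\mathcal{Y}_R$. A Prokhorov-optimal coupling of $\mu,\nu$ produces parametrizations $\phi_\mu,\phi_\nu\colon[0,1]\to[-R,R]$ pushing Lebesgue measure to $\mu,\nu$ and differing only on a set of small Lebesgue measure, bounding $\Box(Y_\mu,Y_\nu)$ in terms of $\dpr(\mu,\nu)$; conversely, any pair of parametrizations witnessing $\Box(Y_\mu,Y_\nu)$ gives a coupling of $\mu,\nu$ on $[-R,R]$ whose defect controls $\dpr(\mu,\nu)$, exploiting the rigidity of the fixed ambient metric. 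This makes $\mu\mapsto Y_\mu$ a homeomorphism from the compact Prokhorov space onto $(\mathcal{Y}_R,\Box)$, so $\mathcal{Y}_R$ is $\Box$-compact and $\Box$-closed. Through this homeomorphism, the required Hausdorff--Prokhorov convergence $\M(\P_n;1,R)\to\M(\P;1,R)$ becomes the Hausdorff--$\Box$ convergence $\P_n\cap\mathcal{Y}_R\to\P\cap\mathcal{Y}_R$. The latter splits into: (i) any $\Box$-subsequential limit of $Y_n\in\P_n\cap\mathcal{Y}_R$ lies in $\P\cap\mathcal{Y}_R$, because $\P$ absorbs $\Box$-limits from the $\P_n$ by weak convergence and $\mathcal{Y}_R$ is $\Box$-closed; (ii) for any $Y\in\P\cap\mathcal{Y}_R$, weak convergence supplies $X_n\in\P_n$ with $X_n\to Y$ in $\Box$, and composing the near-isometric 1-Lipschitz maps witnessing this convergence with the inclusion $Y\hookrightarrow[-R,R]$ produces $Y_n\in\P_n\cap\mathcal{Y}_R$ with $Y_n\to Y$ in $\Box$.

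The main obstacle will be the topology comparison, in particular the reverse inequality bounding $\dpr(\mu,\nu)$ by $\Box(Y_\mu,Y_\nu)$: this requires converting the freedom in choosing box-parametrizations into a genuine transport plan on the fixed real line, and one must carefully control how mass can be rearranged by different parametrizations of $[-R,R]$. The remaining ingredients---the bijective characterization of $\M(\P;1,R)$, the $\Box$-closedness of $\mathcal{Y}_R$, and the construction of $Y_n$ from $X_n$ via near-isometric parametrizations---are essentially formal modulo standard properties of $\Box$.
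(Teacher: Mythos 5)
Your proposal is not measured against an in-paper argument, since the paper quotes this lemma from Ozawa--Shioya \cite[Lemma~3.6]{ozawa2015limit} without proof; judged on its own merits, it contains one correct and genuinely useful reduction and one fatal step. The characterization $\mu\in\M(\P;1,R)\iff Y_\mu\in\P$ is right (the corestriction argument works because $\mu_X$ has full support, so $f(X)\subset\supp f_*\mu_X$), and the $\Box$-compactness of $\mathcal{Y}_R$ does follow from the easy direction ``$\dpr$-small $\Rightarrow$ $\Box$-small''. The gap is the claimed homeomorphism: $\mu\mapsto Y_\mu$ is not injective on mm-isomorphism classes, because $\Box$ sees $Y_\mu$ only up to mm-isomorphism while the isometries $x\mapsto \pm x+c$ of $\R$ move $\mu$ without changing the class of $Y_\mu$. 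Concretely, $\Box\bigl(Y_{\delta_{-R}},Y_{\delta_{R}}\bigr)=0$ (both are one-point spaces) while $\dpr(\delta_{-R},\delta_{R})=\min\{1,2R\}>0$; likewise any asymmetric $\mu$ on $[-R,R]$ and its reflection $(-\id)_*\mu$ have $\Box(Y_\mu,Y_{(-\id)_*\mu})=0$ but positive Prokhorov distance. So the ``reverse inequality'' you flag as the main obstacle is not merely delicate --- it is false as stated, and with it falls the asserted equivalence between Hausdorff--$\dpr$ convergence of the measurements and Hausdorff--$\Box$ convergence of the slices $\P_n\cap\mathcal{Y}_R$.

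The architecture is repairable, but the repair is exactly the missing content. Since every isometric embedding of a subset of $\R$ into $\R$ has the form $x\mapsto\pm x+c$, the true comparison is $\inf_{g\in\mathrm{Isom}(\R)}\dpr(\mu,g_*\nu)\leq C\,\Box(Y_\mu,Y_\nu)$, after which one returns into $[-R,R]$ by post-composing with the $1$-Lipschitz clamp $\R\to[-R,R]$, using that measurement sets are stable under post-composition by $1$-Lipschitz self-maps of $[-R,R]$. Note also that one half of the Hausdorff estimate never needs any of this: if $\mu_n\in\M(\P_n;1,R)$ stayed $\varepsilon$-far from $\M(\P;1,R)$, Prokhorov-compactness of measures on $[-R,R]$ gives a $\dpr$-limit $\mu$, the easy direction gives $\Box(Y_{\mu_n},Y_\mu)\to 0$, and the second clause of weak convergence forces $Y_\mu\in\P$, a contradiction. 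For the other half, the cited proof argues directly: given $\mu\in\M(\P;1,R)$, take $X\in\P$ with $Y_\mu\prec X$, use weak convergence to find $X_n\in\P_n$ with $\Box(X_n,X)\to 0$, and invoke the quantitative fact that $\Box$-closeness of two mm-spaces forces Hausdorff--$\dpr$-closeness of their $(1,R)$-measurements. Be aware that this last fact is also not ``formal'': box-closeness does not hand you near-isometric $1$-Lipschitz maps between the spaces, only parametrizations from $[0,1]$ with small distortion off a small set, so transporting a $1$-Lipschitz function from $X$ to $X_n$ requires a McShane-type $1$-Lipschitz extension plus clamping --- the same machinery your item (ii) silently assumes.
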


\begin{definition}[Observable diameter of pyramid]
    Let $\P$ be a pyramid and $\kappa\in[0,1]$ a real number. We define the ($\kappa$-)\emph{observable diameter} of $\P$ as
    \begin{equation*}
        \od(\P;-\kappa) \coloneqq \sup\{\ \od(X;-\kappa)\mid X\in\P \}.
    \end{equation*}
\end{definition}

\begin{remark}
    The observable diameter defined above is expressed using measurements as
    \begin{align*}
        \od(\P;-\kappa) & = \sup\{\ \pd(f_*\mu_X;1-\kappa)\mid X\in\P \text{ and } f\in\lip1(X)\ \} \\
                        & = \sup\{\ \pd(\mu;1-\kappa)\mid \mu\in\M(\P;1)\ \}.
    \end{align*}
\end{remark}

\begin{lemma}\label{lemma:odrightcontinuous}
    For a pyramid $\P$, the $\kappa$-observable diameter of $\P$ is lower semi-continuous and right-continuous in $\kappa > 0$.
\end{lemma}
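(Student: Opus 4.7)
The plan is to derive both claims by transporting lower semi-continuity of $\pd(\mu;\alpha)$ in $\alpha$ (\Cref{lem:pdLowerSemiContinuous}) through the identity
\[
    \od(\P;-\kappa) = \sup\{\pd(\mu; 1-\kappa) \mid \mu \in \M(\P;1)\}
\]
noted in the remark just above, and then combining with monotonicity.

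First I would fix $\mu \in \M(\P;1)$ and check that the function $\kappa \mapsto \pd(\mu;1-\kappa)$ is lower semi-continuous on $\kappa > 0$: given $\kappa_n \to \kappa$ we have $1-\kappa_n \to 1-\kappa$, so \Cref{lem:pdLowerSemiContinuous} gives $\liminf_n \pd(\mu;1-\kappa_n) \geq \pd(\mu;1-\kappa)$. Next, since the pointwise supremum of a family of $[0,\infty]$-valued lower semi-continuous functions is again lower semi-continuous, taking the supremum over $\mu \in \M(\P;1)$ of the above functions yields that $\kappa \mapsto \od(\P;-\kappa)$ is lower semi-continuous on $(0,\infty)$. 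This establishes the first assertion.

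For right-continuity, I would observe that $\od(\P;-\kappa)$ is monotonically non-increasing in $\kappa$: when $\kappa \leq \kappa'$ we have $1-\kappa \geq 1-\kappa'$, so any Borel set $A$ with $\mu(A) \geq 1-\kappa$ also satisfies $\mu(A) \geq 1-\kappa'$; hence $\pd(\mu;1-\kappa') \leq \pd(\mu;1-\kappa)$ for each $\mu \in \M(\P;1)$, and taking sup yields $\od(\P;-\kappa') \leq \od(\P;-\kappa)$. Now fix $\kappa > 0$ and let $\kappa_n \downarrow \kappa$. By monotonicity $\limsup_n \od(\P;-\kappa_n) \leq \od(\P;-\kappa)$, while the lower semi-continuity proved above gives $\liminf_n \od(\P;-\kappa_n) \geq \od(\P;-\kappa)$. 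Hence $\lim_{n\to\infty} \od(\P;-\kappa_n) = \od(\P;-\kappa)$, which is right-continuity.

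There is no real obstacle: the argument is a routine combination of a cited semi-continuity lemma, the stability of lower semi-continuity under suprema, and the trivial monotonicity of partial diameter in the mass threshold. The only point worth a little care is to keep track that $\od$ may take the value $+\infty$, which is harmless since lower semi-continuity is formulated in $[0,\infty]$ and the monotonicity argument does not require finiteness.
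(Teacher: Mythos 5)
Your proposal is correct and takes essentially the same approach as the paper: it transports the lower semi-continuity of $\pd(\mu;\alpha)$ from \Cref{lem:pdLowerSemiContinuous} through the supremum over $\M(\P;1)$, and then combines lower semi-continuity with the monotone non-increase of $\od(\P;-\kappa)$ to obtain right-continuity. The paper's proof is just a condensed version of this argument, leaving implicit the details you spell out (composition with $\kappa\mapsto 1-\kappa$, stability of lower semi-continuity under suprema, and the $\limsup$/$\liminf$ squeeze).
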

\begin{proof}
    From Lemma~\ref{lem:pdLowerSemiContinuous}, $\pd(\mu;1-\kappa)$ is lower semi-continuous in $\kappa > 0$ for any $\mu \in \M(\P;1)$. By the definition of $\od(\P;-\kappa)$, it is also lower semi-continuous. Since $\od(\P;-\kappa)$ is monotonically non-increasing in $\kappa > 0$, this completes the proof.
\end{proof}

\begin{remark}
    In \cite[Definition~3.2]{ozawa2015limit}, the $\kappa$-observable diameter of a pyramid is defined as the right-hand limit of the $(\kappa+\varepsilon)$-observable diameter introduced here, as $\varepsilon\to 0+$. From Lemma~\ref{lemma:odrightcontinuous}, these two definitions coincide to each other.
\end{remark}

\section{Counterexample}
\begin{example}\label{example:odcounterexample}
    Take any real number $R>0$ and any natural number $N\geq 2$. Define an mm-space
    \begin{equation*}
        X_N \coloneqq \{R, 2R, \ldots, 2NR\} \subset \R,
    \end{equation*}
    where $d_{X_N}$ is the Euclidean metric on $\R$ and $\mu_{X_N}$ is the normalized counting measure on $X_N$. Take any $\alpha\in(1/2N,1/N]$ and set $\kappa \coloneqq 1-\alpha$. From Lemma~\ref{lemma:lip1reductpartdiam}, it follows that
    \begin{align*}
        \od(X_N;-\kappa)
         & = \pd({\id}_*\mu_{X_N};\alpha)                                                \\
         & = \inf\{\,\diam A \mid A\subset X_N \text{ and } \mu_{X_N}(A) \geq \alpha\,\} \\
         & = \inf\{\,\diam A \mid A\subset X_N \text{ and } \#A = 2\,\} = R.
    \end{align*}
    We put
    \begin{equation*}
        I \coloneqq \bigl[-(N-1)R,\,(N-1)R\,\bigr] \text{ and } c \coloneqq \frac{\ 2(N-1)\ }{2N-1} < 1.
    \end{equation*}
    Setting $s(x) \coloneqq c(x-R) - (N-1)R$, we see that
    \begin{align*}
        s(X_N) & \subset [s(R),s(2NR)]    \\
               & = [-(N-1)R, (N-1)R] = I.
    \end{align*}
    Lemma~\ref{lemma:preservedistancepartdiam} implies
    \begin{equation*}
        \od(X_N,I;-\kappa)\geq \pd(s_*\mu_{X_N},\alpha) = cR.
    \end{equation*}
    For any $f\in \lip1(X_N,I)$, we have
    \begin{equation*}
        \sum_{n=1}^{N-1} |f(nR) - f((n+1)R)| \leq \diam I = cR(N-1).
    \end{equation*}
    Thus, there exists $n\in\{1,2,\ldots,N-1\}$ such that $|f(nR) - f((n+1)R)| \leq cR$. Since $\mu(\{nR,(n+1)R\}) = 1/N$, we have
    \begin{equation*}
        f_*\mu_{X_N}(\{f(nR),f((n+1)R)\}) = 1/N \geq \alpha
    \end{equation*}
    and
    \begin{equation*}
        \diam\{f(nR),f((n+1)R)\} \leq cR.
    \end{equation*}
    We see that $\pd(f_*\mu_{X_N};\alpha) \leq cR$. From the arbitrariness of $f$, it follows that
    \begin{equation*}
        \od(X_N, I; -\kappa) = cR < R.
    \end{equation*}
\end{example}

\section{Proof of Main Theorem}
\begin{lemma}\label{lemma:lip1preservepartdiam}
    Let $\alpha\in(0,1)$ be a real number, and $\mu$ a Borel probability measure on $\R$ with $\pd(\mu;\alpha) = 1$. Then, there exists a function $f\in\lip1(\R,[-1/\alpha,1/\alpha])$ such that $\pd(f_*\mu;\alpha) = 1$.
\end{lemma}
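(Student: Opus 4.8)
The plan is to exhibit a single monotone map and to reduce everything to a one-dimensional selection problem. Since \Cref{lemma:lip1reductpartdiam} gives $\pd(f_*\mu;\alpha)\le\pd(\mu;\alpha)=1$ for every $f\in\lip1(\R)$, the only thing left to prove is the reverse inequality for a suitable $f$ whose image lies in $[-1/\alpha,1/\alpha]$. Unwinding the definition, $\pd(f_*\mu;\alpha)\ge 1$ means exactly that $\mu(f^{-1}(I))<\alpha$ for every interval $I$ with $\diam I<1$. It is convenient to record the hypothesis in the same language: $\pd(\mu;\alpha)=1$ forces every interval of length ${<}1$ to have $\mu$-measure ${<}\alpha$, which in terms of the quantile function $Q=Q_\mu$ of $\mu$ says that every open window of mass $\alpha$ is spread by at least $1$, i.e. $Q((t+\alpha)^-)-Q(t^+)\ge 1$ for all admissible $t$.

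First I would look for $f$ among \emph{monotone} (non-decreasing) maps, i.e. for a measure $\nu$ obtained from $\mu$ by monotone rearrangement. A non-decreasing $1$-Lipschitz map $f$ with $f_*\mu=\nu$ exists as soon as $dQ_\nu\le dQ_\mu$ as measures on $(0,1)$, the monotone transport $Q_\nu\circ F_\mu$ being $1$-Lipschitz in that case. Thus it suffices to produce a Borel probability measure $\nu$ on $\R$ such that (a) $dQ_\nu\le dQ_\mu$, (b) $Q_\nu$ again satisfies $Q_\nu((t+\alpha)^-)-Q_\nu(t^+)\ge 1$, so that $\pd(\nu;\alpha)\ge 1$, and (c) $\supp\nu$ is contained in an interval of length at most $2/\alpha$, i.e. $\int_{(0,1)}dQ_\nu\le 2/\alpha$. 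Given such a $\nu$, I would translate it isometrically into $[-1/\alpha,1/\alpha]$—which preserves $\pd$ by \Cref{lemma:preservedistancepartdiam}—and compose with the transport map to obtain the desired $f$.

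The heart of the argument is then a purely one-dimensional statement: from the measure $dQ_\mu$ on $(0,1)$, each of whose open $\alpha$-windows has mass $\ge 1$, one can select a sub-measure $dQ_\nu\le dQ_\mu$ whose open $\alpha$-windows still have mass $\ge 1$ but whose total mass is at most $2/\alpha$. I would build $Q_\nu$ greedily from left to right, \emph{retaining} the increase of $Q_\mu$ on the ``tight'' mass-regions (where $Q_\mu$ climbs slowly, so that $\nu$ is forced to follow $\mu$ but no large amount of length accumulates) and \emph{skipping} it on the ``loose'' regions (the large spatial gaps of $\mu$), making every skipped block shorter than $\alpha$ in mass so that no open $\alpha$-window is entirely skipped and condition (b) survives.

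The main obstacle is the sharp constant $2/\alpha$, and it is inseparable from the \emph{strict} nature of the partial diameter (sets of diameter ${<}1$, not ${\le}1$). Where $dQ_\mu$ is atomic—i.e. where $\mu$ itself has atoms—the selected mass cannot be spread at all, so to keep every atom of $\nu$ of mass ${<}\alpha$ and every two of them at distance $\ge 1$ one may be forced to place as many as $2/\alpha$ separated unit-chunks; this is exactly the extremal behaviour of $X_N$ in \Cref{example:odcounterexample}. The delicate point is therefore the bookkeeping showing that this greedy selection never exceeds total mass $2/\alpha$ in any of the atomic, continuous, or mixed cases while respecting $dQ_\nu\le dQ_\mu$; once that estimate is in place, properties (a)–(c) hold and the lemma follows.
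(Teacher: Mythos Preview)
Your plan shares the paper's core idea---a greedy mass-partition that isolates roughly $1/\alpha$ anchor regions of width about $1$---but it stops precisely where the argument has content. You defer the ``bookkeeping'' that bounds the total retained length by $2/\alpha$ and never specify the selection rule; without that, properties (a)--(c) are assertions, not consequences. Your count of ``as many as $2/\alpha$ separated unit-chunks'' is also off: the right count is at most $1/\alpha$ anchors, each carrying a neighbourhood of width~$2$. Finally, the quantile-function reformulation imports avoidable friction: you must track left/right limits of $Q_\mu$, argue that $Q_\nu\circ F_\mu$ really pushes $\mu$ to $\nu$ and is $1$-Lipschitz even when $\mu$ has atoms, and check that your $\alpha$-window condition on $dQ_\nu$ is genuinely equivalent to $\pd(\nu;\alpha)\ge 1$ for \emph{closed} test sets. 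None of this is done.

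The paper executes your greedy idea directly in $\R$, which is both simpler and sharper. It sets $x_0=-\infty$ and iterates $x_{n+1}=\sup\{x:\mu((x_n,x))<\alpha\}$ (capped at the right-tail threshold $x_\infty$), so each half-open block $(x_{n-1},x_n]$ already carries mass $\ge\alpha$; summing gives $N\le 1/\alpha$ with no further bookkeeping. The map is then the explicit
\[
f(x)=-N+\int_{-\infty}^{x}\chi_A\,d\lambda,\qquad A=\bigcup_{n=1}^{N}(x_n-1,x_n+1),
\]
which is $1$-Lipschitz with range in $[-N,N]\subset[-1/\alpha,1/\alpha]$, acts as a translation on each $I_n$, and is constant off $A$. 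A four-case analysis on where a test interval $[a,b]$ of length $<1$ sits relative to the values $f(x_n)$ then yields $f_*\mu([a,b])<\alpha$. This is exactly your ``retain the tight regions, collapse the loose ones,'' but carried out with all constants explicit; if you want to salvage your version, the cleanest route is to drop the quantile detour and run this anchor construction, since the resulting $f$ is already the monotone $1$-Lipschitz transport you were seeking.
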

\begin{proof}
    To construct such a function $f$, we set
    \begin{equation*}
        x_\infty \coloneqq \inf\left\{x\in\R \mmid \mu((x,+\infty)) < \alpha\right\}\in\R
    \end{equation*}
    and define a sequence $(x_n)_{n=0}^\infty\subset[-\infty,+\infty)$ by
    \begin{equation*}
        x_0 \coloneqq -\infty,\quad x_{n+1} \coloneqq \min\bigl\{\,x_\infty,\, \sup\left\{x\in\R \mmid \mu((x_n,x)) < \alpha\right\}\,\bigr\}\in\R
    \end{equation*}
    for $n=0,1,2,\ldots$~. Since $\pd(\mu;\alpha)=1$, we have
    \begin{equation*}
        \mu\bigl((x,x+r)\bigr) < \alpha \text{ for any } x\in\R \text{ and } r\in(0,1),
    \end{equation*}
    and so
    \begin{equation*}
        \min\!\left\{\,x_\infty,\, x_n + 1\,\right\} \leq x_{n+1} \leq x_\infty < +\infty \text{ for } n\geq 1.
    \end{equation*}
    Therefore, the sequence $(x_n)_{n=0}^\infty$ increases strictly until it reaches $x_\infty$ in finite steps, after which it remains constant. Let $N$ be the smallest integer such that $x_N = x_\infty$. Since
    \begin{align*}
        1 & \geq \sum_{n=1}^{N-1} \mu\bigl((x_{n-1}, x_n]\bigr) + \mu\bigl([x_N, +\infty)\bigr) \geq N\alpha,
    \end{align*}
    we have $N \leq 1/\alpha$.

    Next, we construct $f\in\lip1(\R)$. Set the intervals as
    \begin{equation*}
        I_n \coloneqq (x_n-1, x_n+1) \text{ for } n=1,2,\ldots,N,
    \end{equation*}
    and let $A$ be the union of these intervals. We define $f$ as
    \begin{equation*}
        f(x) \coloneqq -N + \int_{-\infty}^x \chi_A \,d\lambda \text { for } x\in\R,
    \end{equation*}
    where $\chi_A$ is the indicator function of $A$, and $\lambda$ the Lebesgue measure.

    Consider the properties of $f$ in order to evaluate the $\alpha$-partial diameter of $f_*\mu$. The function translates elements within each interval $I_n$, ensuring that \begin{equation}\label{eq:translate}
        f(x+a) = f(x)+a \text{ for all } x,a\in\R \text{ with } \{x,x+a\}\subset I_n.
    \end{equation}
    Since $f$ is strictly increasing on the open set $A$ and also non-decreasing over $\R$, it holds that, for any $x\in A$,
    \begin{equation}\label{eq:inverse}
        f^{-1}((-\infty, f(x))) = (-\infty, x), \text{ and } f^{-1}((f(x), +\infty)) = (x, +\infty).
    \end{equation}

    Applying Lemma~\ref{lemma:lip1reductpartdiam}, we observe that $\pd(f_*\mu; \alpha) \leq \pd(\mu; \alpha) = 1$. Let us prove $\pd(f_*\mu; \alpha) \geq 1$. Take any Borel subset $B \subset \R$ with $\diam B < 1$. There exist real numbers $a$ and $b$ such that $B\subset [a,b]$ and $0 < b-a < 1$. It suffices to show that $f_*\mu([a,b]) < \alpha$. There are the following four cases (1)--(4) based on the values of $a$ and $b$ to consider, covering all possibilities:\begin{enumerate}
        \item $b < f(x_1)$.
        \item There exists $n\in\{1,2,\ldots,N\}$ such that $a \leq f(x_n) \leq b$.
        \item There exists $n\in\{1,2,\ldots,N-1\}$ such that $f(x_n) < a < b < f(x_{n+1})$.
        \item $f(x_N) < a$.
    \end{enumerate}

    \begin{description}
        \item[Case (1)] $b < f(x_1)$.
            In this case, there exists $\varepsilon \in (0,1)$ such that $b < f(x_1) - \varepsilon$. Using \eqref{eq:translate} and noting that $\{x_1,\, x_1-\varepsilon\}\subset I_1 = (x_1-1,x_1+1)$, we have $f(x_1-\varepsilon) = f(x_1) - \varepsilon$. From (\ref{eq:inverse}) and the definition of $x_1$, we obtain that\begin{align*}
                f_*\mu([a,b]) & \leq \mu\Big(f^{-1}\big((-\infty, f(x_1) -\varepsilon)\big)\Big) \\
                              & = \mu\big((-\infty, x_1-\varepsilon)\big) < \alpha.
            \end{align*}
        \item[Case (2)] There exists $n\in\{1,2,\ldots,N\}$ such that $a\leq f(x_n)\leq b$. In this case, since $b-a<1$, we have
            \begin{equation*}
                a' \coloneqq a - f(x_n) \in (-1,0\!\;], \text{ and } b' \coloneqq b - f(x_n) \in [\!\;0,1).
            \end{equation*}
            Since $\{a'+x_n$, $x_n$, $b'+x_n\}\in I_n = (x_n-1,x_n+1)$ and by (\ref{eq:translate}), it follows that\begin{equation*}
                f(x_n+a') = f(x_n) + a', \text{ and } f(x_n+b') = f(x_n) + b'.
            \end{equation*}
            By (\ref{eq:inverse}), we obtain\begin{equation*}
                f^{-1}([a,b]) = f^{-1}\Big(\big[f(x_n+a'), f(x_n+b')\big]\Big) = [x_n+a', x_n+b'],
            \end{equation*}
            and so
            \begin{equation*}
                \diam\Big(f^{-1}\big([a,b]\big)\Big) = b'-a' = b-a < 1.
            \end{equation*}
            Since $\pd(\mu;\alpha)=1$, we have
            \begin{equation*}
                f_*\mu\big([a,b]\big) = \mu\Big(f^{-1}\big([a,b]\big)\Big) < \alpha.
            \end{equation*}
        \item[Case (3)] There exists $n\in\{1,2,\ldots,N-1\}$ such that $f(x_n) < a < b < f(x_{n+1})$. In this case, there exists $\varepsilon\in(0,1)$ such that $b < f(x_{n+1}) - \varepsilon$. Since $x_n\in A$ and $\{x_{n+1}-\varepsilon,\,x_{n+1}\}\subset I_{n+1}$, we obtain
            \begin{equation*}
                f^{-1}([a,b]) \subset f^{-1}\big((f(x_n),\ f(x_{n+1}) - \varepsilon)\big) = (x_n,\ x_{n+1} - \varepsilon).
            \end{equation*}
            From the definition of $x_{n+1}$, this implies
            \begin{equation*}
                f_*\mu([a,b]) \leq \mu((x_n,\ x_{n+1} - \varepsilon)) < \alpha.
            \end{equation*}
        \item[Case (4)] $f(x_N) < a$.
            In this case, there exists $\varepsilon \in (0,1)$ such that $f(x_N) + \varepsilon < a$. Using the definition of $x_N$ and noting $\{x_N,x_N+\varepsilon\}\subset I_N$, we have
            \begin{align*}
                f_*\mu([a,b]) & \leq \mu\big(f^{-1}((f(x_N) + \varepsilon, +\infty))\big) \\
                              & = \mu((x_N + \varepsilon, +\infty)) < \alpha.
            \end{align*}
    \end{description}

    Under each scenario, we show that $f_*\mu(B) \leq f_*\mu([a,b]) < \alpha$, which implies $\pd(f_*\mu; \alpha) \geq 1$. Since $\inf f(\R) \geq -N \geq -1/\alpha$ and
    \begin{equation*}
        \sup f(\R) \leq -N + \sum_{n=1}^N \lambda(I_n) = N \leq 1/\alpha,
    \end{equation*}
    the values of $f$ are confined to the interval $[-1/\alpha,1/\alpha]$. This completes the proof.
\end{proof}

\begin{proof}[Proof of Theorem~\ref{thm:lip1minRpartdiam}]
    Define $r \coloneqq \pd(\mu; \alpha)$. If $r=0$, then the function $f \equiv 0$ renders the equality trivially true. Assume $r > 0$ and define a function $s$ as $s(x) \coloneqq x/r$ for $x\in\R$. Observe that $\pd(s_*\mu; \alpha) = 1$. By Lemma~\ref{lemma:lip1preservepartdiam}, there exists a function $g\in \lip1(\R,[-1/\alpha, 1/\alpha])$ such that $\pd(g_*s_*\mu; \alpha) = 1$. We define a function $e$ as $e(x) \coloneqq \min\{R, r\}\cdot x$ for $x\in\R$. Since $\min\{R, r\}/r \leq 1$, we have $f \coloneqq e \circ g\circ s \in \lip1(\R)$. Lemma~\ref{lemma:preservedistancepartdiam} shows that
    \begin{align*}
        \pd(f_*\mu; \alpha)
         & = \min\{R, r\}\cdot \pd(g_*s_*\mu; \alpha) \\
         & = \min\{R, \pd(\mu; \alpha)\}.
    \end{align*}
    Assessing the range of $f$, we see that
    \begin{equation*}
        f(\R) = e(g\circ s(\R)) \subset [-R/\alpha, R/\alpha].
    \end{equation*}
    This completes the proof.
\end{proof}

\begin{lemma}\label{lemma:od-pq-measurement}
    Let $\P,\Q$ be pyramids and $\kappa\in(0,1), \varepsilon\in(0,1-\kappa)$ real numbers. If we have
    \begin{equation}\label{eq:pq-measurement}
        \M\left(\P;1,\frac{R}{1-(\kappa+\varepsilon)}\right) \subset U_\varepsilon\left(\M\left(\Q; 1\right)\right),
    \end{equation}
    then \begin{equation*}
        \min\{R,\od(\P; -(\kappa + \varepsilon))\} \leq \od(\Q; -\kappa) + 2\varepsilon.
    \end{equation*}
\end{lemma}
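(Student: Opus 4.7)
The plan is to reduce to a single measurement in $\M(\P;1)$, run it through the cutoff map supplied by Theorem~\ref{thm:lip1minRpartdiam}, and then transfer the resulting bound to $\M(\Q;1)$ via Prokhorov control. Set $\alpha \coloneqq 1-(\kappa+\varepsilon)\in(0,1)$, so that $R/\alpha = R/(1-(\kappa+\varepsilon))$. Fix an arbitrary $\mu \in \M(\P;1)$, written as $\mu = g_*\mu_X$ for some $X\in\P$ and $g\in\lip1(X)$.

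First, apply Theorem~\ref{thm:lip1minRpartdiam} to $\mu$ with parameter $\alpha$: there is a function $f \in \lip1(\R,[-R/\alpha,R/\alpha])$ with $\pd(f_*\mu;\alpha)=\min\{R,\pd(\mu;\alpha)\}$. Since $f\circ g \in \lip1(X,[-R/\alpha,R/\alpha])$, we have $f_*\mu = (f\circ g)_*\mu_X \in \M(\P;1,R/\alpha)$, which is precisely the left-hand side of the hypothesis~\eqref{eq:pq-measurement}. Hence there exists $\nu \in \M(\Q;1)$ with $\dpr(f_*\mu,\nu)<\varepsilon$, and the Prokhorov-control lemma stated just after the definition of the Prokhorov distance gives
\[\pd(f_*\mu;\alpha) \leq \pd(\nu;\alpha+\varepsilon) + 2\varepsilon = \pd(\nu;1-\kappa) + 2\varepsilon \leq \od(\Q;-\kappa)+2\varepsilon.\]
Combining with the identity from Theorem~\ref{thm:lip1minRpartdiam} yields $\min\{R,\pd(\mu;1-(\kappa+\varepsilon))\}\leq \od(\Q;-\kappa)+2\varepsilon$ for every $\mu\in\M(\P;1)$.

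It remains to take the supremum over $\mu$. A short case analysis (split on whether $\od(\P;-(\kappa+\varepsilon))\leq R$ or not) shows that $\sup_\mu \min\{R,\pd(\mu;\alpha)\} = \min\{R,\sup_\mu \pd(\mu;\alpha)\} = \min\{R,\od(\P;-(\kappa+\varepsilon))\}$, which delivers the claim. The proof is essentially a direct assembly of prior results; the only point requiring attention is checking that the radius $R/\alpha$ appearing in Theorem~\ref{thm:lip1minRpartdiam} matches the radius $R/(1-(\kappa+\varepsilon))$ in the hypothesis and that the Prokhorov shift $\alpha\mapsto\alpha+\varepsilon$ lands on $1-\kappa$, so that the bound upgrades into an $\od(\Q;-\kappa)$ estimate. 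I do not expect any substantive obstacle beyond these bookkeeping checks.
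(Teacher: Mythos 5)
Your proposal is correct and follows essentially the same route as the paper's proof: apply Theorem~\ref{thm:lip1minRpartdiam} with $\alpha = 1-(\kappa+\varepsilon)$ to push a measurement $\mu\in\M(\P;1)$ into $\M(\P;1,R/(1-(\kappa+\varepsilon)))$, use the hypothesis~\eqref{eq:pq-measurement} to find $\nu\in\M(\Q;1)$ with $\dpr(f_*\mu,\nu)<\varepsilon$, invoke the Prokhorov partial-diameter lemma so that $\alpha+\varepsilon=1-\kappa$, and take the supremum (your explicit $\sup\text{-}\min$ exchange is the step the paper compresses into ``the arbitrariness of $f$ and $X$''). No gaps; if anything, you correctly identify the key input as Theorem~\ref{thm:lip1minRpartdiam}, where the paper's proof mis-cites Lemma~\ref{lemma:lip1reductpartdiam}.
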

\begin{proof}
    Take any $X\in\P$ and $f\in\lip1(X)$. By Lemma~\ref{lemma:lip1reductpartdiam}, there exists
    \begin{equation*}
        g\in\lip1\left(\R,\left[-\frac{R}{1-(\kappa+\varepsilon)},\frac{R}{1-(\kappa+\varepsilon)}\right]\right)
    \end{equation*}
    such that \begin{equation*}
        \min\left\{R,\ \pd(f_*\mu_X; 1-(\kappa+\varepsilon))\right\} = \pd(g_*f_*\mu_X; 1-(\kappa+\varepsilon)).
    \end{equation*}
    From \eqref{eq:pq-measurement}, there exists $\nu\in\M(\Q;1)$ such that $\dpr(g_*f_*\mu_X, \nu) \leq \varepsilon$, so that
    \begin{align*}
        \min\{R,\pd(f_*\mu_X; 1-(\kappa+\varepsilon))\}
         & = \pd(g_*f_*\mu_X; 1-(\kappa+\varepsilon)) \\
         & \leq \pd(\nu; 1-\kappa) + 2\varepsilon     \\
         & \leq \od(\Q; -\kappa) + 2\varepsilon.
    \end{align*}
    The arbitrariness of $f$ and $X$ yields
    \begin{equation*}
        \min\{R,\od(\P; -(\kappa + \varepsilon))\} \leq \od(\Q; -\kappa) + 2\varepsilon.
    \end{equation*}
    This completes the proof.
\end{proof}
\begin{proof}[Proof of Theorem~\ref{thm:pyramid_od_limit}]
    The theorem follows from the same discussion as in \cite{ozawa2015limit} using Theorem 1.2. For the completeness we provide the details.

    If $\kappa \geq 1$, all terms in the equalities above are obviously zero. Assume $\kappa < 1$. Take any two real numbers $R>0$ and $\varepsilon > 0$ such that $\kappa + 2\varepsilon < 1$. Lemma~\ref{lemma:weaktoprokhorov} implies that there exists a natural number $N$ such that
    \begin{equation*}
        \hausp{\dpr}\!\left(\M\!\left(\P;1,\frac{R}{1-(\kappa+2\varepsilon)}\right),\ \M\!\left(\P_n; 1,\frac{R}{1-(\kappa+2\varepsilon)}\right)\right) < \varepsilon
    \end{equation*}
    for all $n\geq N$, where $\hausp{\dpr}$ is the Hausdorff distance with respect to the Prokhorov distance. By Lemma~\ref{lemma:od-pq-measurement} and since
    \begin{align*}
        \M\!\left(\P;1,\frac{R}{1-(\kappa+2\varepsilon)}\right)
         & \subset U_\varepsilon\!\left(\M\!\left(\P_n; 1,\frac{R}{1-(\kappa+2\varepsilon)}\right)\right) \\
         & \subset U_\varepsilon(\M(\P_n; 1)) \text{ for all } n \geq N,
    \end{align*}
    we have
    \begin{equation*}
        \min\{R,\od(\P; -(\kappa + 2\varepsilon))\} \leq \liminf_{n\to\infty} \od(\P_n; -(\kappa+\varepsilon)) + 2\varepsilon.
    \end{equation*}
    Similarly, from Lemma~\ref{lemma:od-pq-measurement} and since
    \begin{align*}
        \M\!\left(\P_n; 1,\frac{R}{1-(\kappa+\varepsilon)}\right)
         & \subset \M\!\left(\P_n; 1,\frac{R}{1-(\kappa+2\varepsilon)}\right)     \\
         & \subset U_\varepsilon(\M\left(\P; 1\right)) \text{ for all } n \geq N,
    \end{align*}
    we have also
    \begin{equation*}
        \limsup_{n\to\infty} \min\{R,\od(\P_n; -(\kappa + \varepsilon))\} \leq \od(\P; -\kappa) + 2\varepsilon.
    \end{equation*}
    The arbitrariness of $R$ yields
    \begin{align*}
        \od(\P; -(\kappa + 2\varepsilon))
         & \leq \liminf_{n\to\infty} \od(\P_n; -(\kappa+\varepsilon)) + 2\varepsilon \\
         & \leq \limsup_{n\to\infty} \od(\P_n; -(\kappa+\varepsilon)) + 2\varepsilon \\
         & \leq \od(\P; -\kappa) + 4\varepsilon.
    \end{align*}
    By Lemma~\ref{lemma:odrightcontinuous}, the limit of those inequalities as $\varepsilon\to 0$ proves the equalities. This completes the proof of the theorem.
\end{proof}

\begin{proof}[Proof of \Cref{prop:lip1minRpartdiamIsSharp}]
    Take any real number $R>0$ and $n=2,3,\ldots$. We set $\kappa \coloneqq 1-1/n$. By \Cref{example:odcounterexample}, there exists an mm-space $X_n$ such that
    \begin{equation*}
        \od(X_n; -\kappa_n) = R > \od(X_n, [-(n-1)R, (n-1)R]; -\kappa_n),
    \end{equation*}
    which implies
    \begin{equation*}
        Q\left(R,\left[-\!\left(\frac{1}{1-\kappa_n}-1\right)\!R,\ \left(\frac{1}{1-\kappa_n}-1\right)\!R\right],-\kappa_n\right) > 1.
    \end{equation*}
    Thus, we have
    \begin{align*}
         & \limsup_{\kappa\to 1-}\ \sup \left\{ \frac{2R}{1-\kappa} - \diam I \mid I\subset \R \text{ Borel}, Q(R,I,\kappa) \leq 1\right\} \\
         & \leq \limsup_{\kappa\to 1-} \left(\frac{2R}{1-\kappa} - 2R\left(\frac{1}{1-\kappa}-1\right)\right) \leq 2R.
    \end{align*}
    This completes the proof.
\end{proof}

\bibliographystyle{abbrv}
\bibliography{obsdiam}
\end{document}